    \numberwithin{equation}{section}
\newcommand{\diag}{{diag}}
\newcommand{\spn}{{span}}
\newcommand{\F}{\mathbb{F}}
\newcommand{\R}{\mathbb{R}}
\newcommand{\bP}{\mathbb{P}}
\newcommand{\Hq}{\mathbb{H}}
\newcommand{\C}{\mathbb{C}}
\newcommand{\M}{\mathcal{M}}
\newcommand{\Conv}{{\rm conv}}
\newcommand{\D}{\mathbb{D}}
\newcommand{\bS}{\mathbb{S}}
\newcommand{\vc}[1]{\boldsymbol{#1}}
\newcommand{\ch}{\text{conv}}
\newcommand\xqed[1]{%
  \leavevmode\unskip\penalty9999 \hbox{}\nobreak\hfill
  \quad\hbox{#1}}
\newcommand\squareend{\xqed{$\square$}}
\newtheorem{theorem}{Theorem}[section]
 \newtheorem{corollary}[theorem]{Corollary}
 \newtheorem{lemma}[theorem]{Lemma}
 \newtheorem{proposition}[theorem]{Proposition}
 \theoremstyle{definition}
 \newtheorem{example}[theorem]{Example}
 \numberwithin{equation}{section}
\newtheorem*{rep@theorem}{\rep@title}
\newcommand{\newreptheorem}[2]{
\newenvironment{rep#1}[1]{%
\def\rep@title{#2 \ref{##1}}%
\begin{rep@theorem}}%
{\end{rep@theorem}}}
\begin{document}

\thanks{The second author was partially supported by FCT through project UID/MAT/04459/2019 and the third author was partially supported by FCT through CMA-UBI, project PEst-OE/MAT/UI0212/2020.}

\title{Quaternionic Numerical Range of Complex Matrices}

\author[L. Carvalho]{Lu\'{\i}s Carvalho}
\address{Lu\'{\i}s Carvalho, ISCTE - Lisbon University Institute\\    Av. das For\c{c}as Armadas\\     1649-026, Lisbon\\   Portugal}
\email{luis.carvalho@iscte-iul.pt}
\author[Cristina Diogo]{Cristina Diogo}
\address{Cristina Diogo, ISCTE - Lisbon University Institute\\    Av. das For\c{c}as Armadas\\     1649-026, Lisbon\\   Portugal\\ and \\ Center for Mathematical Analysis, Geometry,
and Dynamical Systems\\ Mathematics Department,\\
Instituto Superior T\'ecnico, Universidade de Lisboa\\  Av. Rovisco Pais, 1049-001 Lisboa,  Portugal
}
\email{cristina.diogo@iscte-iul.pt}
\author[S. Mendes]{S\'{e}rgio Mendes}
\address{S\'{e}rgio Mendes, ISCTE - Lisbon University Institute\\    Av. das For\c{c}as Armadas\\     1649-026, Lisbon\\   Portugal\\ and Centro de Matem\'{a}tica e Aplica\c{c}\~{o}es \\ Universidade da Beira Interior \\ Rua Marqu\^{e}s d'\'{A}vila e Bolama \\ 6201-001, Covilh\~{a}}
\email{sergio.mendes@iscte-iul.pt}
\subjclass[2010]{15B33, 47A12}

\keywords{quaternions, numerical range, complex matrices, numerical radius}
\date{\today}

\maketitle

\begin{abstract}
The paper explores further the computation of the quaternionic numerical range of a complex matrix. We prove a modified version of a conjecture by So and Tompson. Specifically, we show that the shape of the quaternionic numerical range for a complex matrix depends on the complex numerical range and two real values. We establish under which conditions the bild of a complex matrix coincides with its complex numerical range and when the quaternionic numerical range is convex.
\end{abstract}
\maketitle
\maketitle
\section{Introduction}
Let $\Hq$ denote the Hamilton quaternions and let $\M_n(\Hq)$ be the set of $n\times n$ matrices with quaternionic entries. The quaternionic numerical range of a given matrix $A \in \M_n(\Hq)$, denoted by $W_{\Hq}(A)$, is the set of $\vc{x}^*A\vc{x}$, with $\vc{x}$ running over the quaternionic unit sphere of $\Hq^n$. Apart from special cases, such as normal matrices \cite{STZ} and real matrices  \cite{CDM}, little is known about the computation of the quaternionic numerical range.
Such difficulty in computing $W_{\Hq}(A)$ was one of the reasons that led Kippenhahn \cite{Ki} to introduce  the bild $B(A)$ of $A$, that is, the intersection of $W_{\Hq}(A)$ with the complex plane.
 In fact, since every element of $W_{\Hq}(A)$ is similar to an element of the closure of the upper half plane, it is enough to consider the upper bild $B^+(A)=B(A)\cap\C^+$.



In \cite[theorem 7.1]{ST} it is proved that the quaternionic numerical range of a $2\times2$ complex matrix $A$ is determined by its complex numerical range.  Specifically, when $W_{\C}(A) \cap \R\neq \emptyset$ then $B^+(A) =\ch\{W_{\C}(A)\cap \C^+, W_{\C}(A^*)\cap \C^+\}$; when $ W_{\C}(A) \cap \R=\emptyset$, we have that $B^+(A) =\ch\{W_{\C}(A)\cap \C^+, T\}$, for a certain real $T$.
In the same paper \cite[10; p.364]{ST} it is conjectured that these results can be generalized for $n\times n$ complex matrices, see  \cite[10(i)-(iii); p.364]{ST}.
 However these conjectures prove to be untrue. This can be seen from the fact that there exist matrices $A=H+Si\in \M_n(\C)$, with $S$ a diagonal positive definite matrix, which are unitary equivalent to matrices $\tilde A=\tilde H+\tilde Si$, with $\tilde S$ a diagonal indefinite matrix. Since $W_{\C}(A)\cap \R=\emptyset$ and $W_{\C}(\tilde A)\cap \R\neq\emptyset$, the conjectures propose formulas for $W_\Hq(A)$ and $W_\Hq(\tilde A)$, which in some cases may differ. This is a contradiction because we know that $W_\Hq(A)=W_\Hq(\tilde A)$. Next example shows, using only normal matrices and  the result \cite[Main Theorem, p.192]{STZ}, the failure of the conjecture.
\begin{example}\label{example_intro}
Let $A\in\M_{4}(\C)$ be the normal matrix $A=\diag(-1-i,-1-i,1+i,1+i)$. The complex numerical range of $A$ is $W_{\C}(A)=[-1-i,1+i]$ and that of $A^*$ is $W_{\C}(A^*)=[-1+i,1-i]$. Since $W_{\C}(A) \cap \R=\{0\}$, the conjectured upper bild  \cite[10(ii),(iii); p.364]{ST} is
$$\ch\{W_{\C}(A) \cap \C^+, W_{\C}(A^*) \cap \C^+\}=\ch\{-1+i, 1+i,0\}.$$
%
%
However, using \cite{STZ} the upper bild is the square $B^+(A)=\ch\{-1+i, 1+i,-1, 1\}$. It is also known that the matrix $A$ is unitary equivalent (in $\Hq$) to $\tilde{A}=\diag(-1+i,-1+i,1+i,1+i)$, whose complex numerical range lies on the upper bild. If the conjecture raised in \cite {ST} was true, the upper bild would be the triangle $\ch\{-1+i,1+i, T\}$, for some real $T$, which is different from the above $B^+(A)$. \squareend
\end{example}

So and Thompson's result \cite[theorem 7.1]{ST} is interesting as it provides a way to relate the complex with the quaternionic numerical range.
It allows to use the extensive body of knowledge established for the complex numerical range and bring it to the quaternionic field.
It turns out that we can prove a slightly modified version of this theorem. Theorem \ref{upperbild} shows that, for a complex matrix $A \in\M_n(\C)$, we have $B^+(A)=\ch\{W_{\C}(A) \cap \C^+, W_{\C}(A^*) \cap \C^+, \underline v, \overline v\}$, with $\underline v, \overline v \in \R$. In this sense this is a follow up of \cite{CDM}, where we concluded that the complex numerical range and the bild coincide for real matrices.

 We start section \ref{preliminaries_section} with recalling a few results about the numerical range in the quaternionic setting and we also fix notation to be used throughout the text.
In section \ref{shape_section} we give a characterization of the bild $B(A)$ for complex matrices $A \in\M_n(\C)$ (proposition \ref{bild_expr}). This allows us to compute $B(A)$ and then infer about the quaternionic numerical range. In corollary \ref{cor_bild} we prove that $B(A)$ is always included in the convex hull of $W_{\C}(A)$ and $W_{\C}(A^*)$. Although the complex and quaternionic numerical radius coincide for complex matrices (see corollary \ref{numerical radius}), example \ref{tlvbk} shows that, contrary to the literature, the quaternionic numerical radius is not a norm in general, in a striking difference with its complex counterpart. Theorem \ref{upperbild} is the main result of the article, as it proposes a shape for the numerical range inspired in the aformentioned conjecture proposed by So and Thompson. It is then used as a stepping stone to several other results.  Corollary \ref{bild_complex_nr} gives a necessary and sufficient condition for the bild  $B(A)$ to coincide with the complex numerical range $W_{\C}(A)$. As a consequence, it provides a way to determine if the numerical range of a complex matrix is convex without requiring its computation. In corollary \ref{upperbild_2times2}, theorem  \ref{upperbild} is used to clarify why the numerical range has the shape defined in \cite{ST} for $2\times 2$ complex matrices.

%
%
%
%

%
\section{Preliminaries}\label{preliminaries_section}
The Hamiltonian quaternions $\Hq$ is an algebra over $\R$ with basis $\{1, i, j, k\}$. The product in $\Hq$ is given by $i^2=j^2=k^2=ijk=-1$. The pure quaternions are denoted by $\bP=\mathrm{span}_{\R}\,\{i,j,k\}$. The real and imaginary parts of a quaternion $q=a_0+a_1i+a_2j+a_3k\in\Hq$ are denoted by $Re(q)=a_0$ and $Im(q)=a_1i+a_2j+a_3k$, respectively. The conjugate of $q$ is given by $q^*=Re(q)-Im(q)$ and its norm is $|q|^2=qq^*$. Two quaternions $q_1,q_2\in\Hq$ are called similar, $q_1\sim q_2$, if there exists $s \in \Hq$ with $|s|=1$ such that $s^{*}q_2 s=q_1$. The equivalence class containing all the quaternions similar to $q$ is denoted by $[q]$. A necessary and sufficient condition for the similarity of $q_1$ and $q_2$ is that $Re(q_1)=Re(q_2) \textrm{ and }|Im(q_1)|=|Im(q_2)|$. The segment joining two quaternions $q_1, q_2 \in \Hq$ is denoted by  $[q_1, q_2]=\{\alpha q_1 +(1-\alpha)q_2: \alpha \in [0,1]\}$.

Let $\F$ denote $\R$, $\C$ or $\Hq$. Let $\F^n$ be the $n$-dimensional $\F$-space. For $\vc{x}\in \F^n$, $\vc{x}^*$ is the conjugate transpose of $\vc{x}$; and the unitary vector with the same direction as $\vc{x}$ is $\vc{x}_{\bS} \in \bS_{\F^n}$, thus $\vc{x}= |\vc{x}| \vc{x}_{\bS}$. The disk with centre $\vc{a}\in\F^n$ and radius $r>0$ is the set $\D_{\F^n}(\vc{a},r)=\{\vc{x}\in\F^n:|\vc{x}-\vc{a}|\leq r\}$ and its boundary $\partial \D_{\F^n}(\vc{a},r)$ is the sphere $\bS_{\F^n}(\vc{a},r)$. In particular, if $\vc{a}=0$ and $r=1$, we simply write $\D_{\F^n}$ and $\bS_{\F^n}$. With this notation, the group of unitary quaternions is denoted by $\bS_{\Hq}$.

Let $\M_{n} (\F)$ be the set of all $n\times n$ matrices with entries over $\F$. For $A\in\M_{n} (\F)$, $\bar{A}$ and $A^*$ denote the conjugate and the conjugate transpose of $A$, respectively.

The set
\[
W_{\F}(A)=\{\vc{x}^*A\vc{x}: \vc{x}\in \bS_{\F^n}\}
\]
is called the numerical range of $A$ in $\F$.  The numerical range of $A$ is invariant under unitary equivalence, that is,  $W_{\F}(A)=W_{\F}(U^*AU)$, for every unitary $U\in \M_n(\F)$ (\cite[theorem 3.5.4]{R}).

 It is well known that  $q\in W_{\Hq}(A)$ is equivalent to $[q]\subseteq W_{\Hq}(A)$ \cite[p.38]{R}. Therefore, it is enough to study the subset of complex elements in each similarity class. This set is known as $B(A)$, the bild of $A$,
\[
B(A)=W_{\Hq}(A)\cap \C.
\]
Although the bild may not be convex, the upper bild $B^+(A)=W_{\Hq}(A)\cap \C^+$ is always convex, see \cite{ST}. When $\F=\C$, we denote $W_{\C}^+(A)=W_{\C}(A)\cap\C^+$.

Let $\F$ be $\C$ or $\Hq$. The complex and the quaternionic numerical radius of $A$ is given by
\[
w_{\F}(A)=\max\{|z|: z\in W_{\F}(A)\}.
\]
Note that $\max\{|z|: z\in B(A)\}$ coincides with $w_{\Hq}(A)$.

Taking into account that $\F$ can be seen as a real subspace of $\Hq$, we denote the projection of $\Hq$ over $\F$ by $\pi_{\F}:\Hq \rightarrow \F$. The projection of $W_{\Hq}(A)$ over $\F$ is $\pi_{\F}(W_{\Hq}(A))=\{\pi_{\F}(\omega): \omega\in W_{\Hq}(A)\}$.

Given $A \in \M_n(\Hq)$ there exists an associated complex matrix
\[
\chi(A)=\left[
\begin{array}{cc}
A_1 &  A_2 \\
-\bar A_2 & \bar A_1
\end{array}
\right]\in\M_{2n}(\C),
\]
where $A_1, A_2 \in \M_n(\C)$ and $A=A_1+A_2j$. Au-Yeung found necessary and sufficient conditions for the convexity of $W_{\Hq}(A)$ in terms of the complex numerical range of $\chi(A)$ (see \cite{Ye1, Ye2}).



For any given matrix $A \in \M_n(\C)$, we will work with the usual Hermitian and skew-Hermitian complex decomposition of $A$, $A=H+Si$, assuming furthermore that the matrix $S$ is real diagonal. In fact, any complex matrix $A$ can be written as $A=\tilde H+\tilde S$, with $\tilde H =\frac{A+A^*}{2}$ Hermitian and $\tilde S =\frac{A-A^*}{2}$ skew-Hermitian. Now if we take $U \in \M_n(\C)$ a unitary which diagonalizes $\tilde S$, we obtain $Si=U^*\tilde S U$, with $S$ real diagonal. It is worth pointing out that we used complex unitary matrices, not quaternionic, and thus the matrix $U^*\tilde{H}U$ is also complex.
Since the numerical range is invariant under unitary equivalence, we can work with $U^*AU= U^*\tilde H U+U^*\tilde S U=H+Si$. Therefore, unless mentioned otherwise, we will only consider matrices of the form $A=H+Si \in \M_n(\C)$, where $H \in \M_n(\C)$ is Hermitian and $S \in \M_n(\R)$ is diagonal.
Since $q\in \bS_{\bP}$ is equivalent to $i$, $q\sim i$, then any matrix with entries only in $\spn\{1, q\}$ is unitary equivalent to complex matrices. Hence, the results also apply to such matrices $A\in\M_n(\spn\{1, q\})$.

We recall that a matrix $S \in \M_n(\R)$ is positive definite  (resp., positive semi-definite) if $\vc{x}^*S\vc{x}>0$ (resp. $\vc{x}^*S\vc{x}\geq 0$),  and negative definite  (resp., negative semi-definite) if $\vc{x}^*S\vc{x}<0$ (resp. $\vc{x}^*S\vc{x}\leq 0$), for all $\vc{x} \in \bS_{\R^n}$.
Moreover, $S$ is indefinite if there are $\vc{x}, \vc{y} \in \bS_{\R^n}$ such that $\vc{x}^*S\vc{x}>0$ and $\vc{y}^*S\vc{y}<0$.

\section{Numerical range of complex matrices} \label{shape_section}
Fundamental to understand the quaternionic numerical range of a matrix $A\in \M_{n}(\Hq)$ is its bild, in view of the similarity relation $B(A)= W_{\Hq}(A)/\sim$. The bild of $A$, however, is in general difficult to compute and the usual procedure is to obtain first the quaternionic numerical range.

When $A$ is a complex matrix, we show in the present paper that it is possible to reverse this approach, that is, to compute the bild and then infer about the quaternionic numerical range. For this matters we start by characterizing those elements $\vc{q}=\vc{x}+\vc{y}j\in \bS_{\Hq^n}$ such that $\vc{q}^*A\vc{q}\in B(A)$.

Let $\mathcal{D}_0$ be the set defined by
\[
\mathcal{D}_0=\{ (\vc{x},\vc{y})  \in \bS_{\C^{2n}}: \vc{q}^*A\vc{q}\in B(A), \textrm{ with }\vc{q}=\vc{x}+\vc{y}j\in \bS_{\Hq^n}\}.
\]
Our first result shows that the elements in the bild are of the form $\vc{x}^*A\vc{x}+\vc{y}^*A^*\vc{y}$, with $(\vc{x}, \vc{y})\in \mathcal{D}_0$.
Moreover, the condition for $(\vc{x}, \vc{y})\in \mathcal{D}_0$ is that $\vc{x}^*S\vc{y}=0$, where $A=H+Si\in \M_{n}(\C)$.

%

%
%
\begin{proposition} \label{bild_expr}
Given a complex matrix $A \in \M_n(\C)$, we have:
\begin{equation*}
\mathcal{D}_0=\{ (\vc{x},\vc{y})  \in \bS_{\C^{2n}}: \vc{x}^*S\vc{y}=0\}
\end{equation*}
and
\begin{equation}
B(A)=\Big\{\vc{x}^*A\vc{x}+\vc{y}^* A^* \vc{y}: (\vc{x}, \vc{y})\in \mathcal{D}_0 \Big\}. \label{prop_eq_1}
\end{equation}
\end{proposition}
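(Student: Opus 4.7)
The plan is a direct computation of $\vc{q}^*A\vc{q}$ for $\vc{q}=\vc{x}+\vc{y}j\in \bS_{\Hq^n}$, followed by a split of the result along the real-direct-sum decomposition $\Hq=\C\oplus\C j$: the $j$-component will vanish exactly when $\vc{x}^*S\vc{y}=0$, while the surviving complex part will be the expression in \eqref{prop_eq_1}, so both claims drop out simultaneously. A short preliminary observation justifies the ambient sphere: from $\vc{q}^*\vc{q}=|\vc{x}|^2+|\vc{y}|^2$ one has $\vc{q}\in \bS_{\Hq^n}$ iff $(\vc{x},\vc{y})\in \bS_{\C^{2n}}$, and from the scalar identity $q^*=\bar a-j\bar b$ for $q=a+bj$ one gets $\vc{q}^*=\vc{x}^*-j\vc{y}^*$.

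The main computation uses only two algebraic facts: the commutation rule $jz=\bar z\,j$ for $z\in\C$ (which upgrades to $jM=\bar M j$ for any complex matrix $M$), and $j^2=-1$. Expanding
\[
\vc{q}^*A\vc{q}=(\vc{x}^*-j\vc{y}^*)A(\vc{x}+\vc{y}j)
\]
produces four terms. For the two ``mixed'' ones I would use that $\vc{y}^*A\vc{x}$ and $\vc{y}^*A\vc{y}$ are complex scalars, so that $j(\vc{y}^*A\vc{x})=\overline{\vc{y}^*A\vc{x}}\,j=\vc{x}^*A^*\vc{y}\,j$ and $j(\vc{y}^*A\vc{y})j=-\vc{y}^*A^*\vc{y}$. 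Collecting,
\[
\vc{q}^*A\vc{q}=\bigl(\vc{x}^*A\vc{x}+\vc{y}^*A^*\vc{y}\bigr)+\vc{x}^*(A-A^*)\vc{y}\cdot j.
\]

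Finally, the standing hypothesis $A=H+Si$ with $H$ Hermitian and $S$ real diagonal gives $A^*=H-Si$ and hence $A-A^*=2Si$, so $\vc{x}^*(A-A^*)\vc{y}=2i\,\vc{x}^*S\vc{y}$ and
\[
\vc{q}^*A\vc{q}=\bigl(\vc{x}^*A\vc{x}+\vc{y}^*A^*\vc{y}\bigr)+2i\,(\vc{x}^*S\vc{y})\,j.
\]
Since $\C$ and $\C j$ are independent as real subspaces of $\Hq$, membership in $B(A)=W_{\Hq}(A)\cap\C$ is equivalent to the $j$-coefficient being zero, i.e.\ to $\vc{x}^*S\vc{y}=0$; this gives the description of $\mathcal{D}_0$, and when the condition holds the remaining $\C$-part is exactly \eqref{prop_eq_1}, yielding both inclusions for $B(A)$.

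The only real obstacle is keeping the non-commutative bookkeeping straight: complex scalars pass through $j$ only after being conjugated, and it is precisely this conjugation together with $j^2=-1$ that turns the two mixed terms into the $A^*$-terms in the final expression; once those rules are applied carefully, the whole argument reduces to a single computation.
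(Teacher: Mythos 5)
Your proof is correct and follows essentially the same route as the paper: expand $(\vc{x}^*-j\vc{y}^*)A(\vc{x}+\vc{y}j)$ using $jz=\bar z j$ and $j^2=-1$, collect the complex part $\vc{x}^*A\vc{x}+\vc{y}^*A^*\vc{y}$ and the $\C j$-part $\vc{x}^*(A-A^*)\vc{y}j=2i(\vc{x}^*S\vc{y})j$, and read off membership in $\C$ from the vanishing of the latter. The only (welcome) addition is your explicit remark that $\vc{q}\in\bS_{\Hq^n}$ iff $(\vc{x},\vc{y})\in\bS_{\C^{2n}}$, which the paper leaves implicit.
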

\begin{proof}
An element $\omega$ of the numerical range of $A \in \M_n(\C)$ is of the form
\begin{align}
\omega & =\vc{q}^*A\vc{q}=(\vc{x}+\vc{y}j)^*A(\vc{x}+\vc{y}j) \nonumber\\
& =(\vc{x}^*-j\vc{y}^*)A(\vc{x}+\vc{y}j) \nonumber\\
& =\vc{x}^*A\vc{x}+\vc{x}^*A\vc{y}j - j\vc{y}^* A\vc{x}-j\vc{y}^* A\vc{y}j\nonumber\\
& =\vc{x}^*A\vc{x}+\vc{x}^*A\vc{y}j - \vc{x}^* A^*\vc{y}j+\vc{y}^* A^*\vc{y}\nonumber\\
& =\big(\vc{x}^*A\vc{x}+\vc{y}^* A^*\vc{y}\big)+\vc{x}^* \big(A - A^*\big)\vc{y}j. \label{num_range_element0}
\end{align}
Since $A-A^*$ is the skew-Hermitian $2Si$, where $S$ is a real diagonal matrix, we have:
\[
\omega =\vc{x}^*A\vc{x}+\vc{y}^* A^*\vc{y}+ 2\vc{x}^*S\vc{y}k.
\]

We see that $(\vc{x}, \vc{y})\in \mathcal{D}_0$, that is, $\omega\in B(A)$, if and only if, $\vc{x}^*S\vc{y}=0$, in which case we have
\begin{align}\label{num_range_element2}
\omega & =\vc{x}^*A\vc{x}+\vc{y}^* A^*\vc{y}.
\end{align}\end{proof}

Taking into account the decomposition $A=H+Si$, the bild of $A$ may also be written as
\begin{equation}\label{B(A) in terms of H,S}
B(A)=\Big\{\vc{x}^*H\vc{x}+\vc{y}^* H \vc{y}+(\vc{x}^*S\vc{x}-\vc{y}^*S\vc{y})i: (\vc{x},\vc{y})\in\mathcal{D}_0 \Big\}.
\end{equation}

One consequence of our first proposition is a criterium for an element $\vc{q}^*A\vc{q}$ of the quaternionic numerical range to be in the reals. We have
\begin{equation}\label{Real part of bild}
B(A)\cap\R=\Big\{\vc{x}^*H\vc{x}+\vc{y}^* H \vc{y}: (\vc{x},\vc{y})\in\mathcal{D}_0\cap\mathcal{D}_1 \Big\},
\end{equation}
where
$\mathcal{D}_1=\{ (\vc{x},\vc{y})  \in \bS_{\C^{2n}}: \vc{x}^*S\vc{x}=\vc{y}^*S\vc{y}\}.$

From proposition \ref{bild_expr}, an element of the bild $B(A)$ has the form
\begin{eqnarray*}
  \omega &=& \vc{x}^*A\vc{x}+\vc{y}^* A^*\vc{y}, \\
   &=& \alpha^2\vc{x}_\bS^*A\vc{x}_\bS+(1-\alpha^2)\vc{y}_\bS^* A^*\vc{y}_\bS,
\end{eqnarray*}
where $(\vc{x},\vc{y})\in\bS_{\C^{2n}}$ and $\alpha^2=\|\vc{x}\|^2$. We conclude the following:
\begin{corollary}\label{cor_bild}
Let $A \in\M_n(\C)$, then
\[
B(A)\subset \mathrm{conv}\{W_{\C}(A), W_{\C}(A^*)\}.
\]
\end{corollary}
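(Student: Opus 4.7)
The proof is essentially a direct reading of the formula displayed immediately before the corollary, so the plan is short. Starting from Proposition \ref{bild_expr}, any $\omega \in B(A)$ can be written as $\omega = \vc{x}^* A \vc{x} + \vc{y}^* A^* \vc{y}$ for some $(\vc{x}, \vc{y}) \in \mathcal{D}_0 \subseteq \bS_{\C^{2n}}$. In particular $\|\vc{x}\|^2 + \|\vc{y}\|^2 = 1$, so set $\alpha = \|\vc{x}\| \in [0,1]$, giving $\|\vc{y}\|^2 = 1 - \alpha^2$.

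Assuming first that $\alpha \in (0,1)$, write $\vc{x} = \alpha \vc{x}_\bS$ and $\vc{y} = \sqrt{1-\alpha^2}\, \vc{y}_\bS$ with $\vc{x}_\bS, \vc{y}_\bS \in \bS_{\C^n}$. Then
\[
\omega = \alpha^2\, \vc{x}_\bS^* A \vc{x}_\bS + (1-\alpha^2)\, \vc{y}_\bS^* A^* \vc{y}_\bS,
\]
which is a convex combination of an element $\vc{x}_\bS^* A \vc{x}_\bS \in W_\C(A)$ and an element $\vc{y}_\bS^* A^* \vc{y}_\bS \in W_\C(A^*)$. Hence $\omega \in \mathrm{conv}\{W_\C(A), W_\C(A^*)\}$.

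The two boundary cases are immediate: if $\alpha = 0$ then $\omega = \vc{y}_\bS^* A^* \vc{y}_\bS \in W_\C(A^*)$, and if $\alpha = 1$ then $\omega = \vc{x}_\bS^* A \vc{x}_\bS \in W_\C(A)$. In both situations $\omega$ lies trivially in the convex hull. There is no real obstacle here; the only small subtlety is just noting that the constraint $\vc{x}^* S \vc{y} = 0$ from $\mathcal{D}_0$ plays no role once we pass to the convex combination, since $W_\C(A)$ and $W_\C(A^*)$ are defined by an unconstrained sphere in $\C^n$ and we may choose $\vc{x}_\bS$ and $\vc{y}_\bS$ independently as needed.
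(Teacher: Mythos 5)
Your proof is correct and is essentially the same argument the paper gives just before the corollary: rescale $\vc{x}$ and $\vc{y}$ to unit vectors and observe that $\omega$ becomes a convex combination of a point of $W_{\C}(A)$ and a point of $W_{\C}(A^*)$. The only difference is that you spell out the degenerate cases $\alpha=0,1$, which the paper leaves implicit.
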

This result can also be obtained from the fact that $B(A)\subseteq W_{\C}(\chi_A)$ (\cite[Theorem 9.1]{Ki}) and that, for $A \in\M_n(\C)$,
\begin{equation}\label{chi_A_complexmatrix}
 W_{\C}(\chi_A)=\Conv \{W_{\C}(A), W_{\C}(A^*)\}.
\end{equation}


Corollary  \ref{cor_bild} leads us to the conclusion that the quaternionic and complex numerical radius of a $n\times n$ complex matrix $A$ coincide.

\begin{corollary}\label{numerical radius}
Let $A \in\M_n(\C)$. The quaternionic and the complex numerical radius of $A$ coincide.
\end{corollary}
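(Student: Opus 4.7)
The claim is that $w_\Hq(A) = w_\C(A)$ for any complex matrix $A \in \M_n(\C)$. My plan is to establish the two inequalities separately, with only the second one requiring work.

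For the easy direction, since $\bS_{\C^n} \subseteq \bS_{\Hq^n}$, we have $W_\C(A) \subseteq W_\Hq(A)$, and taking the supremum of the modulus on both sides yields $w_\C(A) \leq w_\Hq(A)$.

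For the reverse inequality, I would use the observation recorded just before the statement that $\max\{|z| : z \in B(A)\}$ coincides with $w_\Hq(A)$, which follows from similarity invariance (similar quaternions have the same modulus, and every class in $W_\Hq(A)$ meets $B(A)$). By Corollary \ref{cor_bild}, every $\omega \in B(A)$ can be written as $\omega = \alpha z_1 + (1-\alpha) z_2$ with $z_1 \in W_\C(A)$, $z_2 \in W_\C(A^*)$, and $\alpha \in [0,1]$. Applying the triangle inequality gives $|\omega| \leq \alpha |z_1| + (1-\alpha)|z_2| \leq \max\{w_\C(A), w_\C(A^*)\}$. Finally, because $W_\C(A^*) = \overline{W_\C(A)}$ and the modulus is invariant under complex conjugation, $w_\C(A^*) = w_\C(A)$, so $|\omega| \leq w_\C(A)$ for every $\omega \in B(A)$, whence $w_\Hq(A) \leq w_\C(A)$.

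There is no real obstacle here; the only subtle point is remembering to invoke the reduction $w_\Hq(A) = \max\{|z| : z \in B(A)\}$ so that Corollary \ref{cor_bild} can be applied, and then using the convexity of $|\cdot|$ together with $w_\C(A^*) = w_\C(A)$ to collapse the bound.
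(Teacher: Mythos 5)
Your proof is correct and follows essentially the same route as the paper: both reduce $w_{\Hq}(A)$ to $\max\{|z|:z\in B(A)\}$, apply Corollary \ref{cor_bild} to write each bild element as a convex combination of points of $W_{\C}(A)$ and $W_{\C}(A^*)$, and bound its modulus by $\max\{w_{\C}(A),w_{\C}(A^*)\}=w_{\C}(A)$. You merely spell out the easy inclusion $w_{\C}(A)\leq w_{\Hq}(A)$ more explicitly than the paper does.
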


\begin{proof}
It is enough to show that $w_{\Hq}(A)\leq w_{\C}(A)$. From corollary  \ref{cor_bild} we have
\[
\max\{|z|: z\in B(A)\} \leq \max\{|z|: z\in \mathrm{conv}\{W_{\C}(A), W_{\C}(A^*)\}\}.
\]
Given $z \in \mathrm{conv}\{W_{\C}(A), W_{\C}(A^*)\}$, there exists $z_1 \in W_{\C}(A)$ and $z_2 \in W_{\C}(A^*)$  such that
$z=\alpha z_1+(1-\alpha) z_2$, $\alpha\in [0,1]$. Then
$|z| \leq \max \{|z_1|, |z_2|\}$ and so $|z|\leq \max\{w_{\C}(A), w_{\C}(A^*) \}=w_{\C}(A)$.
Therefore, $w_{\Hq}(A)\leq  w_{\C}(A)$.
\end{proof}

The previous result also follows from \cite[Theorem 3.1]{K}, where it is stated that $w_{\Hq}(A)=w_{\C}(\chi_A)$. Since $A$ is a complex matrix, from (\ref{chi_A_complexmatrix}) we have $w_{\Hq}(A)=w_{\C}(A)$.

Therefore, when $A$ is a $n\times n$ complex matrix the numerical radius of $A$ is a norm. However, contrary to what it is stated in \cite[Proof of Theorem 3.3]{K}, when $A\in\M_n(\Hq)$, $\omega(A)$ is not a norm as the next example shows.

\begin{example}\label{tlvbk}
Let $A=\left[\begin{array}{cc} 1 & h\\ 0 & 1 \end{array}\right]$ with $h \in \bS_\Hq$.  This matrix can be written as the sum of a real diagonal $D$ and a nilpotent matrix. Thus by \cite[Theorem 4.2]{CDM2} we can conclude that $W(A)=\D_\Hq(1,1/2)$. And the numerical radius of $A$ is $\omega(A)=3/2$.

On the other hand let $ih=z \in \Hq$, so that $iA=\left[\begin{array}{cc} i & z\\ 0 & i \end{array}\right]$. Computing $\vc{x^*} iA \vc{x}$ for $\vc{x} \in \bS_{\Hq^2}$ we obtain
$
\vc{x^*} iA \vc{x}=x_1^* i x_1 + x_2^*i x_2+x_1^*zx_2,
$
 whose norm, using triangle's inequality , is
\[
\|\vc{x^*} iA \vc{x}\|\leq |x_1|^2+ |x_2|^2+|x_1||x_2| \leq 1+\max_{x^2+y^2=1} xy=3/2.
\]
In the previous equation we have equality if all the vectors are parallel (looking at quaternions as vectors in $\R^4$).  Thus we have $\|\vc{x^*} iA \vc{x}\|=3/2$ if and only if
\[
y_{1}^* i y_{1}= y_{2}^*i y_{2}=y_{1}^*z y_{2}, \quad \text{ with } y_{i}=x_{i, \bS}, \text{ for } i=1,2.
\]
Using the first equality we conclude that $\big(y_{2}y_{1}^*\big)i = i \big(y_{2}y_{1}^*\big)$.  Since $\big(y_{2}y_{1}^*\big)$ commutes with $i$, we have that $y_{2}y_{1}^*$ is complex. From the second equality we get  $z= y_{1} y_{2}^*i $, thus $z$ is also complex. That is, a necessary condition for $\|\vc{x^*} \big(iA\big) \vc{x}\|=3/2$ is that $z$ must be complex. Then, if $h$ is such that $z=ih$ is not complex,
\[
\omega (iA) =\max_{\vc{x} \in \bS_{\Hq^2}} \|\vc{x^*} iA \vc{x}\| <3/2.
\]
Then $\omega (iA) \neq |i| \omega (A)=\omega(A)$, and, in conflict with the complex case, the quaternionic  numerical radius is not a norm.

\end{example}


Next theorem gives the shape of the quaternionic numerical range for a general $n \times n$ complex matrix. This will depend on the complex  numerical range and two real values, equal to the largest and smallest real values in the quaternionic numerical range. Accordingly,  we define
\begin{align}
\underline{v}\equiv &\min B(A)\cap \R\label{defvmin}\\
\overline{v} \equiv &\max  B(A)\cap \R.\label{defvmax}
\end{align}

The following auxiliary result will be used in the proof of next theorem.
\begin{lemma}
Let $A=H+Si\in \M_n(\C)$. Let $\vc{z}_1, \vc{z}_2\in \bS_{\C^n}$ be such that $\vc{z}_1^* S \vc{z}_2=0$. Then
\[
[\omega_1, \omega_2]\subseteq B(A),
\]
where $\omega_1=\vc{z}_1^*A \vc{z}_1$ and $\omega_2=\vc{z}_2^*A^* \vc{z}_2$.
\end{lemma}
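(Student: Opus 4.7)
The plan is to apply Proposition \ref{bild_expr} directly by constructing, for each point of the segment $[\omega_1,\omega_2]$, an explicit pair in $\mathcal{D}_0$ realising that point. The segment $[\omega_1,\omega_2]$ is parametrised as $\alpha\omega_1+(1-\alpha)\omega_2$ for $\alpha\in[0,1]$, so the goal reduces to exhibiting, for each such $\alpha$, vectors $\vc{x},\vc{y}\in\C^n$ with $(\vc{x},\vc{y})\in \mathcal{D}_0$ and $\vc{x}^*A\vc{x}+\vc{y}^*A^*\vc{y}=\alpha\omega_1+(1-\alpha)\omega_2$.

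The natural choice is $\vc{x}=\sqrt{\alpha}\,\vc{z}_1$ and $\vc{y}=\sqrt{1-\alpha}\,\vc{z}_2$. Since $\vc{z}_1,\vc{z}_2\in\bS_{\C^n}$, one has $\|\vc{x}\|^2+\|\vc{y}\|^2=\alpha+(1-\alpha)=1$, so $(\vc{x},\vc{y})\in\bS_{\C^{2n}}$. The orthogonality hypothesis $\vc{z}_1^*S\vc{z}_2=0$ then yields
\[
\vc{x}^*S\vc{y}=\sqrt{\alpha(1-\alpha)}\,\vc{z}_1^*S\vc{z}_2=0,
\]
which by Proposition \ref{bild_expr} places $(\vc{x},\vc{y})$ in $\mathcal{D}_0$. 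Computing the associated element of $B(A)$ using the characterisation in \eqref{prop_eq_1} gives
\[
\vc{x}^*A\vc{x}+\vc{y}^*A^*\vc{y}=\alpha\,\vc{z}_1^*A\vc{z}_1+(1-\alpha)\,\vc{z}_2^*A^*\vc{z}_2=\alpha\omega_1+(1-\alpha)\omega_2,
\]
which is the desired point of $[\omega_1,\omega_2]$.

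I do not anticipate any real obstacle: the lemma is essentially a convexity statement along a single chord, and the scaling $\sqrt{\alpha},\sqrt{1-\alpha}$ is exactly what makes the unit-norm constraint of $\mathcal{D}_0$ compatible with the convex combination at the level of $\vc{x}^*A\vc{x}+\vc{y}^*A^*\vc{y}$ (which is quadratic in the vector coordinates). The only subtlety worth emphasising is that the constraint $\vc{x}^*S\vc{y}=0$ defining $\mathcal{D}_0$ is bihomogeneous in $(\vc{x},\vc{y})$, so scaling each component independently preserves it; this is precisely why the hypothesis $\vc{z}_1^*S\vc{z}_2=0$ is the right assumption to propagate along the whole segment.
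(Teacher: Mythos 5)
Your proof is correct and is essentially the paper's own argument: the paper's proof is the one-line observation that Proposition \ref{bild_expr} applies with $\vc{x}=\sqrt{\alpha}\,\vc{z}_1$ and $\vc{y}=\sqrt{1-\alpha}\,\vc{z}_2$, which is exactly the construction you spell out. Your version just makes the norm and orthogonality checks explicit.
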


\begin{proof}
The result follows from proposition \ref{bild_expr} with $\vc{x}=\sqrt{\alpha}\vc{z}_1$ and $\vc{y}=\sqrt{1-\alpha}\vc{z}_2$, with $0\leq \alpha \leq 1$.
\end{proof}

\begin{theorem}\label{upperbild}
Let $A \in\M_n(\C)$. The upper bild of $A$ is given by
\[
B^+(A)=\mathrm{conv}\,\{W_{\C}^+(A),W_{\C}^+(A^*), \underline{v}, \overline{v}\}.
\]
\end{theorem}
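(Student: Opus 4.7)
The plan is to prove the two inclusions separately. The easy direction $\supseteq$ will follow from the convexity of $B^+(A)$ (recalled in the preliminaries) once we verify that each of the four constituents sits inside $B^+(A)$: for the first two, given $\vc{x}_{\bS}\in \bS_{\C^n}$ the pair $(\vc{x}_{\bS},\vc{0})$ lies in $\mathcal{D}_0$ because $\vc{x}_{\bS}^*S\vc{0}=0$, so Proposition \ref{bild_expr} yields $\vc{x}_{\bS}^*A\vc{x}_{\bS}\in B(A)$; this shows $W_\C(A)\subseteq B(A)$ and hence $W_\C^+(A)\subseteq B^+(A)$, and symmetrically $W_\C^+(A^*)\subseteq B^+(A)$ by using $(\vc{0},\vc{y}_\bS)$. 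The real values $\underline{v},\overline{v}$ belong to $B(A)\cap\R\subseteq B^+(A)$ directly from (\ref{defvmin})-(\ref{defvmax}).

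For the nontrivial inclusion $\subseteq$, I would take $\omega\in B^+(A)$ and use Proposition \ref{bild_expr} to write $\omega=\vc{x}^*A\vc{x}+\vc{y}^*A^*\vc{y}$ with $(\vc{x},\vc{y})\in \mathcal{D}_0$. Setting $\alpha=\|\vc{x}\|^2$ and $\beta=1-\alpha=\|\vc{y}\|^2$, the boundary cases $\alpha=0$ or $\beta=0$ are immediate: $\omega$ coincides with an element of $W_\C(A^*)$ or $W_\C(A)$ which, since $\omega\in \C^+$, lies in $W_\C^+(A^*)$ or $W_\C^+(A)$. Otherwise, letting $z_1=\vc{x}_\bS^*A\vc{x}_\bS\in W_\C(A)$ and $z_2=\vc{y}_\bS^*A^*\vc{y}_\bS\in W_\C(A^*)$, one has the convex combination $\omega=\alpha z_1+\beta z_2$, and the constraint $\vc{x}^*S\vc{y}=0$ reduces to $\vc{x}_\bS^*S\vc{y}_\bS=0$, so the lemma immediately preceding the theorem gives $[z_1,z_2]\subseteq B(A)$.

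Now I would do a case split on the signs of $\mathrm{Im}(z_1)$ and $\mathrm{Im}(z_2)$. If both are nonnegative, then $z_1\in W_\C^+(A)$ and $z_2\in W_\C^+(A^*)$, so $\omega\in \mathrm{conv}\{W_\C^+(A),W_\C^+(A^*)\}$ and we are done. Otherwise, since $\mathrm{Im}(\omega)\geq 0$, exactly one of them is strictly negative; suppose $\mathrm{Im}(z_1)<0$ (the other case is symmetric). Then $\beta\,\mathrm{Im}(z_2)\geq -\alpha\,\mathrm{Im}(z_1)>0$, forcing $z_2\in W_\C^+(A^*)$. The segment $[z_1,z_2]\subseteq B(A)$ crosses the real axis at a unique point $r\in B(A)\cap \R$, and because $\mathrm{Im}(\omega)\geq 0=\mathrm{Im}(r)$, $\omega$ lies in the sub-segment $[r,z_2]$. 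By definition of $\underline{v},\overline{v}$, we have $r\in[\underline{v},\overline{v}]$, and combining the two convex combinations expresses $\omega$ as a convex combination of $\underline{v}$, $\overline{v}$, and $z_2\in W_\C^+(A^*)$, giving membership in $\mathrm{conv}\{W_\C^+(A),W_\C^+(A^*),\underline{v},\overline{v}\}$.

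The main obstacle is the mixed case, and it is resolved precisely by the auxiliary lemma: without the fact that the whole segment $[z_1,z_2]$ lies in $B(A)$, one could not guarantee that the real-axis crossing $r$ belongs to $B(A)\cap\R$ and hence to $[\underline{v},\overline{v}]$. Everything else is elementary convex-combination bookkeeping, and Corollary \ref{cor_bild} is implicit in the background but not needed directly, since Proposition \ref{bild_expr} already supplies the decomposition along $W_\C(A)$ and $W_\C(A^*)$.
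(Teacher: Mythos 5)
Your proposal is correct and follows essentially the same route as the paper: the easy inclusion via convexity of $B^+(A)$, and the reverse inclusion via the decomposition $\omega=\alpha z_1+(1-\alpha)z_2$ from Proposition \ref{bild_expr}, a case split on the signs of $\mathrm{Im}(z_1)$, $\mathrm{Im}(z_2)$, and the auxiliary lemma to place the real-axis crossing $r$ in $B(A)\cap\R\subseteq[\underline{v},\overline{v}]$. Your explicit treatment of the boundary cases $\|\vc{x}\|=0$ or $\|\vc{y}\|=0$ is a small tidiness improvement over the paper's version, but the argument is the same.
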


\begin{proof}
We know that $W_{\C}(A) \subseteq W_{\Hq}(A)$. Hence,
$W_{\C}^+(A)\subseteq W_{\Hq}(A) \cap \C^+  =B^+(A).$

From $(W_{\C}(A))^*=W_{\C}(A^*)$ and $(W_\Hq(A))^*=W_\Hq(A)$, we have $W_{\C}^+(A^*)\subseteq B^+(A)$. Since the upper bild is convex,
$\mathrm{conv}\,\{ W_{\C}^+(A), W_{\C}^+(A^*)\} \subseteq B^+(A).$
 Therefore,
$\Conv\,\{W_{\C}^+(A),W_{\C}^+(A^*), \underline{v}, \overline{v}\}\subseteq B^+(A).$

We will now prove the converse inclusion. Let $w\in B^+(A)$. From Proposition \ref{bild_expr}, we know that, for some $(\vc{x}, \vc{y})\in \bS_{\C^{2n}}$,   $\vc{x}^*S\vc{y}=0$ and
\begin{align*}
\omega & =\vc{x}^*A\vc{x}+\vc{y}^* A^*\vc{y}.
\end{align*}
Let $\omega_1=\vc{x}_{\bS}^* A\vc{x}_{\bS}$ and $\omega_2=\vc{y}_{\bS}^* A^*\vc{y}_{\bS}$, then $\omega$ is a convex combination of $\omega_1$ and $\omega_2$, that is, $\omega=\alpha \omega_1+(1-\alpha)\omega_2$, $\alpha\in [0,1]$.
We will consider three cases.

If $\omega_1\in W_{\C}^+(A)$ and $\omega_2\in W_{\C}^+(A^*)$ then clearly
\[
\omega\in \Conv\,\{W_{\C}^+(A),W_{\C}^+(A^*), \underline{v}, \overline{v}\}.
\]

Let us now consider the case where $\omega_1\in W_{\C}^-(A)$ and $\omega_2\in W_{\C}^+(A^*)$. Let $r=[\omega_1,\omega_2]\cap\R$, with $\{\omega_1,\omega_2\}\nsubseteq \R$ ($\omega_1,\omega_2\in\R$ was treated in the first case).
Since $\omega\in [\omega_1,\omega_2]$ is an element of the upper bild, then $\omega\in [r, \omega_2]$. If $r\in [\underline{v}, \overline{v}]$, $\omega$ can be rewritten as convex combination of $\omega_2, \underline{v}, \overline{v}$. Therefore, $\omega\in \Conv\,\{W_{\C}^+(A),W_{\C}^+(A^*), \underline{v}, \overline{v}\}$. Thus, we only need to prove that, in fact, $r\in [\underline{v}, \overline{v}]$. From the previous lemma, we know that $r\in B(A)$ and we have $r\in [\underline{v}, \overline{v}]$. The last case, where $\omega_1\in W_{\C}^+(A)$ and $\omega_2\in W_{\C}^-(A^*)$, is similar.
\end{proof}

When $S$ is positive (semi) definite, then $W_{\C}(A) \subseteq \C^+$ and therefore $W_{\C}(A^*)\subseteq \C^-$. An immediate implication of the previous theorem is that, in this case, the upper bild is the  convex hull of $W_{\C}(A)$ and the two reals $\underline v, \overline v$.
\begin{corollary}
\label{upperbild_positive}
Let $A=H+Si \in \M_n(\C)$. The upper bild of $A$ is:
\begin{enumerate}[(i)]
  \item $B^+(A)=\mathrm{conv}\,\{W_{\C}(A), \underline{v}, \overline{v}\}$, if $S$ is positive (semi) definite;
  \item  $B^+(A)=\mathrm{conv}\,\{W_{\C}(A^*), \underline{v}, \overline{v}\}$, if $S$ is negative (semi) definite.
\end{enumerate}
\end{corollary}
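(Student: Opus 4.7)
The plan is to apply Theorem \ref{upperbild} directly, which gives $B^+(A)=\mathrm{conv}\{W_\C^+(A),W_\C^+(A^*),\underline v,\overline v\}$, and then to show that the definiteness hypothesis on $S$ forces one of the two complex numerical ranges to lie in a closed half-plane, so that its positive part either equals the whole complex numerical range (and stays in the formula) or collapses onto the real segment $[\underline v,\overline v]$ (and becomes redundant).

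For part (i), I would assume $S$ is positive (semi) definite. Using $A=H+Si$, for every $\vc x\in\bS_{\C^n}$ we have $\vc x^*A\vc x=\vc x^*H\vc x+(\vc x^*S\vc x)i$, so $\mathrm{Im}(\vc x^*A\vc x)=\vc x^*S\vc x\geq 0$. Hence $W_\C(A)\subseteq\C^+$, giving $W_\C^+(A)=W_\C(A)$. On the other hand, $W_\C(A^*)=\overline{W_\C(A)}\subseteq\C^-$, so $W_\C^+(A^*)=W_\C(A^*)\cap\R$, a subset of the real axis.

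The key observation is that $W_\C(A^*)\subseteq B(A)$. This follows from the identity $(W_\Hq(A))^*=W_\Hq(A)$ already invoked in the proof of Theorem \ref{upperbild}, together with $W_\C(A^*)\subseteq\C$. Therefore $W_\C^+(A^*)\subseteq B(A)\cap\R$, and the definitions \eqref{defvmin}--\eqref{defvmax} of $\underline v$ and $\overline v$ as the extremes of $B(A)\cap\R$ imply $W_\C^+(A^*)\subseteq[\underline v,\overline v]$. Since $[\underline v,\overline v]\subseteq\mathrm{conv}\{W_\C(A),\underline v,\overline v\}$, the term $W_\C^+(A^*)$ is redundant in Theorem \ref{upperbild}, yielding
\[
B^+(A)=\mathrm{conv}\{W_\C(A),\underline v,\overline v\}.
\]

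Part (ii) follows from the symmetric argument: if $S$ is negative (semi) definite, then $-S$ is positive (semi) definite, so the roles of $W_\C(A)$ and $W_\C(A^*)$ are interchanged. Since both parts reduce to an immediate application of Theorem \ref{upperbild} combined with the sign of $\mathrm{Im}(\vc x^*A\vc x)$, there is no substantive obstacle; the only point that requires care is the inclusion $W_\C(A^*)\subseteq B(A)$ used to absorb $W_\C^+(A^*)$ into $[\underline v,\overline v]$.
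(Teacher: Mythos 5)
Your proof is correct and follows essentially the same route as the paper: the corollary is read off from Theorem \ref{upperbild} once the definiteness of $S$ forces $W_{\C}(A)\subseteq\C^+$ and $W_{\C}(A^*)\subseteq\C^-$. The only (harmless) difference is how you absorb the redundant term $W_{\C}^+(A^*)$: you route it through $W_{\C}(A^*)\subseteq B(A)$ and hence into $[\underline{v},\overline{v}]$, whereas the paper's implicit argument is the slightly more direct $W_{\C}^+(A^*)=W_{\C}(A^*)\cap\R=W_{\C}(A)\cap\R\subseteq W_{\C}(A)$.
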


Theorem \ref{upperbild} provides the shape of the upper bild in terms of complex numerical range and two real values $\underline{v}$ and $\overline{v}$. However these values might be difficult to calculate, as they involve a maximization over non trivial restrictions. Therefore to characterize cases where the calculation of the numerical range is simplified is important. Next corollary covers one of such cases. Specifically, it gives a necessary and sufficient condition for the equality of the bild and the complex numerical range. It becomes clear that when the complex numerical range of a complex matrix is symmetric regarding conjugation, then the bild coincides with the complex numerical range.
\begin{corollary}\label{bild_complex_nr}
Let $A \in\M_n(\C)$. Then $W_{\C}(A) = W_{\C}(A^*)$ if and only if $B(A)=W_{\C}(A)$. Moreover, $B(A)$ is convex.
\end{corollary}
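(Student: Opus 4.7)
The plan is to handle each implication separately and then deduce convexity as a corollary of the equality.

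For the forward direction, assume $W_{\C}(A) = W_{\C}(A^*)$. The inclusion $W_{\C}(A) \subseteq B(A)$ is immediate from the definitions, since any $\vc{x} \in \bS_{\C^n}$ also lies in $\bS_{\Hq^n}$ and yields $\vc{x}^*A\vc{x} \in \C$. For the reverse inclusion, I would invoke Corollary \ref{cor_bild} to obtain $B(A) \subseteq \mathrm{conv}\{W_{\C}(A), W_{\C}(A^*)\} = \mathrm{conv}\,W_{\C}(A)$, and then use the Toeplitz--Hausdorff theorem (convexity of the complex numerical range) to conclude that this hull equals $W_{\C}(A)$ itself. So $B(A) = W_{\C}(A)$, which is convex by the same Toeplitz--Hausdorff theorem.

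For the reverse direction, assume $B(A) = W_{\C}(A)$. The key observation is that $B(A)$ is automatically closed under complex conjugation: if $z = a + bi \in B(A) \subseteq W_{\Hq}(A)$, then $\bar{z} = a - bi$ is similar to $z$ as a quaternion (same real part, same norm of the imaginary part), so by the property $[z] \subseteq W_{\Hq}(A)$ recalled in the preliminaries, $\bar{z} \in W_{\Hq}(A) \cap \C = B(A)$. Combining this with the elementary identity $W_{\C}(A^*) = \overline{W_{\C}(A)}$, the hypothesis $B(A) = W_{\C}(A)$ forces $W_{\C}(A) = \overline{W_{\C}(A)} = W_{\C}(A^*)$.

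I do not anticipate a main obstacle: both directions reduce to assembling facts already stated in the paper (Corollary \ref{cor_bild}, the conjugation-invariance of $W_{\Hq}(A)$, the identity $W_{\C}(A^*) = \overline{W_{\C}(A)}$) together with the Toeplitz--Hausdorff theorem. The only subtle point worth stating explicitly is the conjugation-closure of $B(A)$, which is what makes the reverse implication work.
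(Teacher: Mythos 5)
Your proof is correct and follows essentially the same route as the paper: Corollary \ref{cor_bild} plus Toeplitz--Hausdorff for the forward direction, and the identity $W_{\C}(A^*)=(W_{\C}(A))^*$ for the reverse. The only difference is that you make explicit the conjugation-closure of $B(A)$ (via $\bar z\sim z$ and $[z]\subseteq W_{\Hq}(A)$), a step the paper's proof uses implicitly when passing from $(B(A))^*=(W_{\C}(A))^*$ to $B(A)=W_{\C}(A^*)$, so your write-up is if anything slightly more careful.
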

\begin{proof}
Since $W_{\C}(A)\subseteq W_{\Hq}(A)$ then $W_{\C}(A)\subseteq B(A)$. By corollary \ref{cor_bild}, $B(A)\subseteq \Conv \{W_{\C}(A), W_{\C}(A^*)\}$. Since $W_{\C}(A)=W_{\C}(A^*)$, we have  $ B(A)=W_{\C}(A).$

On the other hand, since $B(A)=W_{\C}(A)$ and using that $\left(W_{\C}(A)\right)^*=W_{\C}(A^*)$, we have $(B(A))^*=\left(W_{\C}(A)\right)^*$. Therefore, $B(A)=W_{\C}(A^*)$ and so  $W_{\C}(A)=W_{\C}(A^*)$. Together with Toeplitz-Hausdorff theorem we conclude that $B(A)$  is convex.
\end{proof}

Notice that,  when $B(A)$ is convex does not imply that $W_{\C}(A) = W_{\C}(A^*)$, as example \ref{example_intro} shows.

 Another immediate consequence of the previous result is that the bild of a real matrix $A \in \M_n(\R)$ is the complex numerical range of $A$, \emph{i.e} $B(A)=W_{\C}(A)$. Thus corollary  \ref{bild_complex_nr} encompasses \cite[theorem 3.7]{CDM}.

We will see now two examples where we can describe the shape of the bild of $A$, and therefore of $W_\Hq(A)$. We would like to point out that, using our results, there are several complex matrices for which the quaternionic numerical range can be computed from the known results of the complex numerical range.

\begin{example}\label{ex_matrix_p}
Let $A\in \M_3(\C)$ be the matrix
\begin{equation*}
A=\left(
                 \begin{array}{ccc}
                  i & 0 & 1 \\
                   0 & i & 0 \\
                    0 & 0 & -i\\
                \end{array}
              \right).
\end{equation*}
 From \cite[Theorem 2.4]{KRS}, $W_\C(A)$ is an ellipse with foci at $\lambda_1=i$, $\lambda_2=-i$ and minor axis equal to 1. In this case, since $W_\C(A^*)=(W_\C(A))^*$, we have
 $W_\C(A^*)$  is also the ellipse described above.  So  $W_\C(A)=W_\C(A^*)$ and
from corollary \ref{bild_complex_nr} we conclude that $B(A)=W_\C(A)$ is an ellipse and of course convex.\squareend
\end{example}

\begin{example}\label{ex_vmin_vmax}
Consider the complex matrix
\begin{equation*}
A=\left(
                 \begin{array}{ccc}
                  i & 0 & 1 \\
                   0 & i & 0 \\
                    0 & 0 & i\\
                \end{array}
              \right)\in M_3(\C).
\end{equation*}
From \cite[Theorem 4.1]{KRS}, $W_\C(A)$ is the disk with center $i$ and radius $\frac{1}{2}$. Therefore,  $W_\C(A^*)$ is the disk with center $-i$ and radius $\frac{1}{2}$. It follows that $W_\C^+(A)=\mathbb{D}(i,\frac{1}{2})$ and $W_\C^+(A^*)=\emptyset$.

Now, given $X=(x,y,z)\in\bS_{\Hq^3}$, we have
\[
X^*AX=x^*ix+y^*iy+z^*iz+x^*z.
\]
Denote the real part of $X^*AX$ by
\[
Re(X^*AX)=f(x,y,z)=x_0z_0+x_1z_1+x_2z_2+x_3z_3
\]
and the imaginary part by
\[
Im(X^*AX)=F_1(x,y,z)i+F_2(x,y,z)j+F_3(x,y,z)k.
\]
Then, $\underline{v}$ (resp., $\overline{v}$) is the minimum (resp., maximum) of the function $f(x,y,z)$, subject to the constrains
\[
F_1(x,y,z)=F_2(x,y,z)=F_3(x,y,z)=0\,\,\,\textrm{and}\,\,\,|x|^2+|y|^2+|z|^2=1.
\]
Using a MATLAB program for optimization with constrains we find that $\underline{v}=-\frac{1}{4}$ and $\overline{v}=\frac{1}{4}$. Invoking theorem \ref{upperbild}, we conclude that the upper bild of $A$ is
\[
B^+(A)=\mathrm{conv}\Bigg\{\mathbb{D}(i,\tfrac{1}{2}), -\frac{1}{4}, \frac{1}{4}\Bigg\}.
\]\squareend
\end{example}

Theorem \ref{upperbild} generalizes theorem $6.1$ of the outstanding \emph{tour de force} \cite{ST}. In the next proposition it is established that for $2\times2$ complex matrices the upper bild is either $B^+(A)=\Conv\{W_{\C}^+(A), W_{\C}^+(A^*)\}$, $B^+(A)=\mathrm{conv}\,\{W_{\C}^+(A), v\}$, for some $v \in \R$, or $B^+(A)= W_{\C}^+(A)$. We thus provide a different and more straightforward proof for the above mentioned theorem of \cite{ST} (see the appendix for the proof).

\begin{corollary}\label{upperbild_2times2}
Let $A=H+Si \in\M_2(\C)$. The upper bild of $A$ is:
\begin{enumerate}[(i)]
  \item $B^+(A)=\mathrm{conv}\,\{W_{\C}^+(A), W_{\C}^+(A^*)\}$, if $S$ is indefinite;
  \item $B^+(A)=\mathrm{conv}\,\{W_{\C}^+(A), v\}$, for some real value $v$, if $S$ is positive definite;
  \item $B^+(A)= W_{\C}^+(A)$, if $S$ is positive semi-definite.
\end{enumerate}
\end{corollary}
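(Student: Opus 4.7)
The plan is to specialize Theorem \ref{upperbild} to $n=2$ and analyze $B(A)\cap\R$ via the characterization \eqref{Real part of bild},
\[
B(A)\cap\R=\Big\{\vc{x}^*H\vc{x}+\vc{y}^*H\vc{y}:\ (\vc{x},\vc{y})\in\mathcal{D}_0\cap\mathcal{D}_1\Big\}.
\]
In dimension two the joint constraints $\vc{x}^*S\vc{y}=0$, $\vc{x}^*S\vc{x}=\vc{y}^*S\vc{y}$ and $\|\vc{x}\|^2+\|\vc{y}\|^2=1$ leave essentially one free degree of freedom, and in each regime they pin down $\underline{v}$ and $\overline{v}$ explicitly enough for Theorem \ref{upperbild} to collapse to the stated form.

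For (iii), either $S=0$ (and the result is immediate from Corollary \ref{bild_complex_nr}), or a basis permutation reduces us to $S=\diag(s,0)$ with $s>0$. In the latter case $\vc{x}^*S\vc{y}=0$ gives $x_1 y_1=0$ while $\vc{x}^*S\vc{x}=\vc{y}^*S\vc{y}$ gives $|x_1|=|y_1|$, forcing $x_1=y_1=0$ and hence $\vc{x}^*H\vc{x}+\vc{y}^*H\vc{y}=h_{22}$. A short direct computation gives $W_\C(A)\cap\R=\{h_{22}\}$, and since $S\geq 0$ the set $W_\C(A)$ lies in $\C^+$, so $W_\C^+(A^*)=W_\C(A)\cap\R=\{h_{22}\}\subseteq W_\C^+(A)$. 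Thus $\underline{v}=\overline{v}=h_{22}\in W_\C^+(A)$ and $B^+(A)=W_\C^+(A)$.

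For (ii), $S$ positive definite forces $\vc{x}^*S\vc{x}>0$ on the unit sphere, so $W_\C(A)$ lies in the open upper half plane and $W_\C^+(A^*)=\emptyset$. The rescaling $\tilde{\vc{x}}=\vc{x}_\bS/\sqrt{\vc{x}_\bS^*S\vc{x}_\bS}$, $\tilde{\vc{y}}=\vc{y}_\bS/\sqrt{\vc{y}_\bS^*S\vc{y}_\bS}$ produces an $S$-orthonormal pair, so $\tilde{P}=[\tilde{\vc{x}},\tilde{\vc{y}}]$ is an invertible $2\times 2$ matrix with $\tilde{P}^*S\tilde{P}=I_2$, whence $\tilde{P}\tilde{P}^*=S^{-1}$. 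Setting $P=[\vc{x},\vc{y}]$ and using $\alpha\vc{x}_\bS^*S\vc{x}_\bS=(1-\alpha)\vc{y}_\bS^*S\vc{y}_\bS=:c$ together with $\mathrm{tr}(PP^*)=1$ yields $c=1/\mathrm{tr}(S^{-1})$ and $PP^*=cS^{-1}$, hence
\[
\vc{x}^*H\vc{x}+\vc{y}^*H\vc{y}=\mathrm{tr}(HPP^*)=\frac{\mathrm{tr}(HS^{-1})}{\mathrm{tr}(S^{-1})},
\]
independent of $(\vc{x},\vc{y})\in\mathcal{D}_0\cap\mathcal{D}_1$. Consequently $\underline{v}=\overline{v}=:v$ and $B^+(A)=\mathrm{conv}\{W_\C^+(A),v\}$.

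For (i), write $S=\diag(t_1,-t_2)$ with $t_1,t_2>0$ and parametrize $x_k=r_k e^{i\alpha_k}$, $y_k=s_k e^{i\beta_k}$. The constraint $\vc{x}^*S\vc{y}=0$ forces both $t_1 r_1 s_1=t_2 r_2 s_2$ and $\beta_1-\alpha_1=\beta_2-\alpha_2$, the latter aligning the phases of the two cross terms in $\vc{x}^*H\vc{x}+\vc{y}^*H\vc{y}$. Combining with $\vc{x}^*S\vc{x}=\vc{y}^*S\vc{y}$ and normalization yields $r_1^2+s_1^2=t_2/(t_1+t_2)$, $r_2^2+s_2^2=t_1/(t_1+t_2)$ and the key identity $r_1 r_2+s_1 s_2=\sqrt{t_1 t_2}/(t_1+t_2)$, which is independent of the remaining free parameter. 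The set of attainable $\vc{x}^*H\vc{x}+\vc{y}^*H\vc{y}$ is therefore the same interval one obtains for $W_\C(A)\cap\R$ from the single-vector condition $t_1|x_1|^2=t_2|x_2|^2$, so $\underline{v},\overline{v}\in W_\C^+(A)$ and Theorem \ref{upperbild} gives $B^+(A)=\mathrm{conv}\{W_\C^+(A),W_\C^+(A^*)\}$. The main obstacle is precisely this cancellation: the remaining free parameter must drop out of $r_1 r_2+s_1 s_2$, forcing $B(A)\cap\R$ to coincide with (rather than strictly contain) $W_\C(A)\cap\R$. This is a genuinely $2\times 2$ phenomenon and clarifies why the conjecture of \cite{ST} holds in dimension two despite failing in general, as Example \ref{example_intro} shows.
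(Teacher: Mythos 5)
Your proposal is correct and shares the paper's overall strategy (reduce everything to understanding $B(A)\cap\R$ via Proposition \ref{bild_expr} and then feed $\underline{v},\overline{v}$ into Theorem \ref{upperbild}), but the execution of the individual cases differs from the paper's in ways worth recording. For case (ii) your argument is genuinely different and more conceptual: the paper computes in coordinates, using the relations $\lambda_1|y_1|^2=\lambda_2|x_2|^2$ and $x_1^*x_2=-y_1^*y_2$ to kill the cross terms and arrive at $v=\big(h_{11}\lambda_2+h_{22}\lambda_1\big)/(\lambda_1+\lambda_2)$, whereas your $S$-orthonormalization $\tilde{P}^*S\tilde{P}=I_2$, $PP^*=cS^{-1}$ yields $v=\mathrm{tr}(HS^{-1})/\mathrm{tr}(S^{-1})$ in one line; the two formulas agree, and your derivation makes transparent why the mechanism is confined to $n=2$ (only there is $\tilde{P}$ square, so that $\tilde{P}^*S\tilde{P}=I$ can be inverted to $\tilde{P}\tilde{P}^*=S^{-1}$). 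For case (i) the paper argues differently: it first shows that the real elements of the bild force $\vc{x}^*S\vc{x}=\vc{y}^*S\vc{y}=0$, so that $\vc{x}_\bS^*H\vc{x}_\bS$ and $\vc{y}_\bS^*H\vc{y}_\bS$ both lie in $W_\C(A)\cap\R$, and then sandwiches $\overline{v}$ between $\max W_\C(A)\cap\R$ and itself; you instead compute $B(A)\cap\R$ as an explicit set and show it equals $W_\C(A)\cap\R$, which is a stronger conclusion. Case (iii) is essentially the paper's computation, with the welcome addition of the $S=0$ subcase, which the paper quietly excludes by assuming $\lambda_1\neq 0$ or $\lambda_2\neq 0$.

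Two points in your case (i) are asserted where they need an argument. First, the passage from the constraints $t_1r_1s_1=t_2r_2s_2$ and $t_1(r_1^2-s_1^2)=t_2(r_2^2-s_2^2)$ to the identities $r_1^2+s_1^2=t_2/(t_1+t_2)$ and $r_1r_2+s_1s_2=\sqrt{t_1t_2}/(t_1+t_2)$ is not immediate: you should observe that, setting $u_k=t_kr_k^2$ and $v_k=t_ks_k^2$, the equal products $u_1v_1=u_2v_2$ and equal differences $u_1-v_1=u_2-v_2$ force $u_1=u_2$ and $v_1=v_2$, i.e.\ $t_1r_1^2=t_2r_2^2$ and $t_1s_1^2=t_2s_2^2$ (this is exactly the paper's condition $\vc{x}^*S\vc{x}=\vc{y}^*S\vc{y}=0$), from which your identities follow. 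Second, the phase-alignment claim $\beta_1-\alpha_1=\beta_2-\alpha_2$ only holds when $t_1x_1^*y_1=t_2x_2^*y_2\neq 0$; the degenerate cases $r_1s_1=r_2s_2=0$ must be checked separately (they reduce either to a single-vector element of $W_\C(A)\cap\R$ or to a contradiction with $(\vc{x},\vc{y})\in\bS_{\C^4}$, so nothing breaks, but the case split is needed). With these two steps filled in, your proof is complete and in fact establishes the sharper statement $B(A)\cap\R=W_\C(A)\cap\R$ in the indefinite case.
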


A natural question to ask is if the previous corollary, or part of it, can be generalized for $n>2$. That is,
  when the matrix $S$ is positive definite, positive semi-definite or indefinite, can the shape of the quaternionic numerical range given by theorem \ref{upperbild} be further simplified? Next example shows that it is not the case for $3 \times 3$ normal complex matrices.
\begin{example}
Consider the complex matrix $A=H+Si \in \M_3(\C)$, where $A=\diag (1+i,1+i,-i)$. Hence $W_{\C}(A)=[1+i,-i]$ and $\Conv\{W_{\C}^+(A), W_{\C}^+(A^*)\}=\Conv\{i, 1/2, 1+i\}$. Since matrix $A$ is normal, its upper bild is $\Conv\{i, 1/2, 1, 1+i\}$. We have, then, an example of a matrix $A$, with an indefinite matrix $S$, $S=diag (1,1,-1)$, where $B^+(A)\neq \Conv\{W_{\C}^+(A), W_{\C}^+(A^*)\}$.

On the other hand, by unitary similarity (in the quaternions) we have that the numerical range of $\tilde{A}=\diag (1+i,1+i,i)$ is equal to the numerical range of $A$. The complex numerical range of $\tilde{A}$ is $[i,1+i]$. Thus we have a matrix with positive definite $S$, where $B^+(A)\neq \Conv\{W_{\C}^+(A), v\}$. \squareend
\end{example}
%
%




\section{Appendix}

We now prove corollary \ref{upperbild_2times2}. To do so it is  important to understand for which vectors $\vc{q}=\vc{x}+\vc{y}j \in \bS_{\Hq^2}$ the element $\vc{q}^*A\vc{q}$ is real and what is the real part of the numerical range. Next preparatory lemma provides conditions for the first of these matters.
\begin{lemma}
Let $A=H+Si \in\M_2(\C)$, where $S=\diag (\lambda_1,\lambda_2)$ and $\lambda_1,\lambda_2 \in \R\backslash \{0\}$. Let  $\vc{q}=\vc{x}+\vc{y}j \in \bS_{\Hq^2}$ and $\vc{x}=(x_1,x_2), \vc{y}=(y_1,y_2)  \in \C^2$. If $\vc{q}^*A\vc{q} \in \R$ then
\begin{equation}\label{2by2_lemma1}
\lambda_1x_1^*x_2 |y_1|^2 =-\lambda_2y_2y_1^*|x_2|^2.
\end{equation}
and
\begin{equation} \label{2by2_lemma2}
\big( \lambda_1 |y_1|^2+\lambda_2|y_2|^2 \big)\big(\lambda_2|x_2|^2 - \lambda_1|y_1|^2\big)=0.
\end{equation}
\end{lemma}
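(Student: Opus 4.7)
The plan is to refine Proposition \ref{bild_expr}. Its proof shows that, for any $\vc{q}=\vc{x}+\vc{y}j\in\bS_{\Hq^2}$,
\[
\vc{q}^*A\vc{q}=\vc{x}^*A\vc{x}+\vc{y}^*A^*\vc{y}+2(\vc{x}^*S\vc{y})k.
\]
The first two summands lie in $\C$, while the last contributes to the $j,k$ coordinates (since $(a+bi)k=-bj+ak$). Thus $\vc{q}^*A\vc{q}\in\R$ forces both $\vc{x}^*S\vc{y}=0$ (the $j,k$-part vanishes, as in the proposition) and $\mathrm{Im}(\vc{x}^*A\vc{x}+\vc{y}^*A^*\vc{y})=0$. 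Writing $A=H+Si$ with $S=\diag(\lambda_1,\lambda_2)$, the latter condition becomes $\vc{x}^*S\vc{x}=\vc{y}^*S\vc{y}$. So the starting point is the pair of scalar equations
\[
\lambda_1 x_1^* y_1+\lambda_2 x_2^* y_2=0, \qquad
\lambda_1(|x_1|^2-|y_1|^2)=\lambda_2(|y_2|^2-|x_2|^2).
\]

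Next, to obtain (\ref{2by2_lemma1}), multiply the first equation on the right by $x_2 y_1^*$. Since $y_1 y_1^*=|y_1|^2$ and $x_2^* x_2=|x_2|^2$, this yields at once
\[
\lambda_1 x_1^* x_2 |y_1|^2 = -\lambda_2 y_2 y_1^*|x_2|^2,
\]
which is (\ref{2by2_lemma1}).

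To obtain (\ref{2by2_lemma2}), take the squared modulus of the first scalar equation,
\[
\lambda_1^2|x_1|^2|y_1|^2=\lambda_2^2|x_2|^2|y_2|^2, \tag{$\#$}
\]
and expand directly:
\[
\bigl(\lambda_1|y_1|^2+\lambda_2|y_2|^2\bigr)\bigl(\lambda_2|x_2|^2-\lambda_1|y_1|^2\bigr)
=\lambda_1\lambda_2|y_1|^2|x_2|^2+\lambda_2^2|y_2|^2|x_2|^2-\lambda_1^2|y_1|^4-\lambda_1\lambda_2|y_1|^2|y_2|^2.
\]
Substitute $\lambda_2^2|y_2|^2|x_2|^2=\lambda_1^2|x_1|^2|y_1|^2$ from ($\#$) and factor $\lambda_1|y_1|^2$ out of the right-hand side to get
\[
\lambda_1|y_1|^2\bigl(\lambda_2|x_2|^2+\lambda_1|x_1|^2-\lambda_1|y_1|^2-\lambda_2|y_2|^2\bigr),
\]
and the bracket vanishes by the second scalar equation $\vc{x}^*S\vc{x}=\vc{y}^*S\vc{y}$. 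This proves (\ref{2by2_lemma2}).

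There is no real obstacle here: once the two scalar conditions from $\vc{q}^*A\vc{q}\in\R$ are extracted correctly, the rest is a short bilinear computation. The only mild subtlety is keeping track of the noncommutativity when rewriting $(a+bi)k$ to identify which combinations contribute to the real part of $\vc{q}^*A\vc{q}$ inside $\Hq$.
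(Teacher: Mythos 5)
Your proof is correct and follows essentially the same route as the paper: both start from the characterization of real elements of $W_{\Hq}(A)$ via the two scalar conditions $\vc{x}^*S\vc{y}=0$ and $\vc{x}^*S\vc{x}=\vc{y}^*S\vc{y}$, and then manipulate these algebraically. The one genuine (if minor) improvement is that by multiplying the relation $\lambda_1 x_1^*y_1+\lambda_2 x_2^*y_2=0$ by $x_2y_1^*$ and by taking its squared modulus, rather than dividing by $|y_1|^2$ and $|x_2|^2$ as the paper does, you obtain (\ref{2by2_lemma1}) and (\ref{2by2_lemma2}) uniformly and avoid the paper's separate treatment of the cases $x_2=0$ and $y_1=0$.
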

\begin{proof}

 From (\ref{Real part of bild}), an element of the numerical range is real if and only if  $\vc{x}^*S\vc{y}=0$ and $\vc{x}^* S\vc{x}- \vc{y}^*S\vc{y}=0$. We have
 \begin{equation}\label{2by2_thm_eq0}
  \vc{x}^*S\vc{y}=0 \Leftrightarrow \lambda_1x_1^* y_1+\lambda_2x_2^*y_2=0
 \end{equation}
 and
 \begin{equation}\label{2by2_thm_eq1}
   \vc{x}^* S\vc{x}- \vc{y}^*S\vc{y}=0 \Leftrightarrow  \lambda_1 |x_1|^2+\lambda_2 |x_2|^2-\lambda_1 |y_1|^2 -\lambda_2 |y_2|^2=0.
 \end{equation}
If $x_2=0$ and $y_1=0$, (\ref{2by2_lemma1}) and (\ref{2by2_lemma2})  follow trivially. If $x_2=0$ and $y_1\neq 0$, from (\ref{2by2_thm_eq0}), $x_1=0$ and from (\ref{2by2_thm_eq1}) we have $\lambda_1|y_1|^2+\lambda_2|y_2|^2=0$. Then  (\ref{2by2_lemma1}) and (\ref{2by2_lemma2})  follow. If $x_2\neq 0$ and $y_1= 0$, an analogous reasoning proves (\ref{2by2_lemma1}) and (\ref{2by2_lemma2}). It remains to see the case $x_2\neq 0$ and $y_1\neq 0$.

From (\ref{2by2_thm_eq0}) we have
\begin{align}
& \lambda_1x_1^*=-\lambda_2x_2^*y_2\frac{y_1^*}{|y_1|^2} \label{2by2_lemma3}\\
\Leftrightarrow& \frac{\lambda_1x_1^*x_2 }{|x_2|^2} =-\frac{\lambda_2y_2y_1^*}{|y_1|^2} \nonumber
\end{align}
and (\ref{2by2_lemma1}) follows. The previous equality implies that
\begin{align}\label{2by2_lemma4}
&\lambda_1^2|x_1|^2=\lambda_2^2\frac{|x_2|^2|y_2|^2}{|y_1|^2}.
\end{align}
On the other hand, from (\ref{2by2_thm_eq1}), we have
\begin{align}
& \frac{\lambda_2^2}{\lambda_1}\frac{|x_2|^2|y_2|^2}{|y_1|^2}+\lambda_2 |x_2|^2-\lambda_1 |y_1|^2 -\lambda_2 |y_2|^2=0 \nonumber\\
 \Leftrightarrow & \lambda_2^2|x_2|^2|y_2|^2 +\lambda_1\lambda_2 |x_2|^2|y_1|^2-\lambda_1^2 |y_1|^2|y_1|^2 -\lambda_1\lambda_2 |y_2|^2|y_1|^2=0 \nonumber\\
    \Leftrightarrow & \lambda_2|y_2|^2 (\lambda_2|x_2|^2 -\lambda_1|y_1|^2) + \lambda_1 |y_1|^2 \big( \lambda_2 |x_2|^2-\lambda_1 |y_1|^2| \big)=0 \nonumber\\
       \Leftrightarrow & \big( \lambda_1 |y_1|^2+\lambda_2|y_2|^2 \big)\big(\lambda_2|x_2|^2 - \lambda_1|y_1|^2\big)=0. \label{2by2_thm_eq2}
\end{align}
This concludes the proof.
\end{proof}

For $2\times 2$ complex matrices, with $\lambda_1\neq 0$ or $\lambda_2\neq 0$,  the upper-bild of $A$ can be characterized as follows.
\begin{repcorollary}{upperbild_2times2}
Let $A=H+Si \in\M_2(\C)$. The upper bild of $A$ is:
\begin{enumerate}[(i)]
  \item $B^+(A)=\mathrm{conv}\,\{W_{\C}^+(A), W_{\C}^+(A^*)\}$, if $S$ is indefinite;
  \item $B^+(A)=\mathrm{conv}\,\{W_{\C}^+(A), v\}$, for some real value $v$, if $S$ is positive definite;
  \item $B^+(A)= W_{\C}^+(A)$, if $S$ is positive semi-definite.
\end{enumerate}
\end{repcorollary}

\begin{proof}
Let $\vc{q}=\vc{x}+\vc{y}j\in\bS_{\Hq^2}$, with $(\vc{x},\vc{y})\in\bS_{\C^{4}}$ and define
\[
\mathcal{R}=\{(\vc{x},\vc{y})\in\bS_{\C^{4}}:\vc{q}^*A\vc{q}\in\R\}
\]

For $(\vc{x},\vc{y})\in\mathcal{R}$, from the previous lemma, we have
\begin{equation}\label{2by2_lemma2_prop}
  \lambda_1 |y_1|^2+\lambda_2|y_2|^2 =0 \quad \text{or} \quad \lambda_2|x_2|^2 - \lambda_1|y_1|^2 =0.
\end{equation}

\textbf{Case (i):}

When $S=\mathrm{diag}(\lambda_1,\lambda_2)$ is indefinite let, without loss of generality, $\lambda_1>0$ and $\lambda_2<0$. In view of theorem \ref{upperbild}, we need to prove that $\underline{v},\overline{v}\in W_{\C}(A)$.


If $\lambda_1|y_1|^2=-\lambda_2|y_2|^2$ then equation (\ref{2by2_thm_eq1}) implies that $\lambda_1|x_1|^2+\lambda_2|x_2|^2=0$. In this case we have that $\vc{x}^*S\vc{x}=\vc{y}^*S\vc{y}=0$.

If $\lambda_2|x_2|^2-\lambda_1|y_1|^2=0$ then $|x_2|=|y_1|=0$. From equation (\ref{2by2_thm_eq1}), $\lambda_1|x_1|^2-\lambda_2|y_2|^2=0$ and so $|x_1|=|y_2|=0$. But this contradicts the fact that $(\vc{x},\vc{y})\in\bS_{\C^{4}}$, so this case is ruled out.

Therefore, from  (\ref{num_range_element2}),  $\overline{v}\in W_{\Hq}(A)\cap\R$ verifies
\begin{align*}
\overline{v} &= \vc{x}^* A\vc{x} + \vc{y}^* A^*\vc{y}\\
&= \alpha^2\vc{x}_\bS^* A\vc{x}_\bS + (1-\alpha^2)\vc{y}_\bS^* A^*\vc{y}_\bS\\
&=\alpha^2\vc{x}_\bS^* H\vc{x}_\bS + (1-\alpha^2)\vc{y}_\bS^* H\vc{y}_\bS,
\end{align*}
where $\alpha^2=\|\vc{x}\|^2=1-\|\vc{y}\|^2\in[0,1]$, that is, $\overline{v}$ is a convex combination of $\vc{x}_\bS^* H\vc{x}_\bS$ and $\vc{y}_\bS^* H\vc{y}_\bS$. Thus,
\[
\overline{v} \leq \mathrm{max}\,\{\vc{x}_\bS^* H\vc{x}_\bS, \vc{y}_\bS^* H\vc{y}_\bS\}.
\]
We have that $\vc{x}_{\bS}^*H\vc{x}_{\bS}=\vc{x}_{\bS}^*A\vc{x}_{\bS}$ and $\vc{y}_{\bS}^*H\vc{y}_{\bS}=\vc{y}_{\bS}^*A\vc{y}_{\bS}$.
It follows that
\[
\overline{v} \leq \mathrm{max}\,\{\vc{x}_\bS^* A\vc{x}_\bS, \vc{y}_\bS^* A\vc{y}_\bS\}\leq \mathrm{max} W_{\C}(A)\cap \R.
\]
Since $W_{\C}(A)\subseteq W_{\Hq}(A)$,
\[
\mathrm{max}W_{\C}(A)\cap \R \leq \mathrm{max}W_{\Hq}(A)\cap \R =\overline{v}.
\]
Hence $\overline{v}\in W_{\C}(A)$.

A similar reasoning allows us to prove that $\underline{v}\in W_{\C}(A)$.

\textbf{Case (ii):}

Suppose $S=\mathrm{diag}(\lambda_1,\lambda_2)$ is positive definite, i.e. $\lambda_1>0$ and $\lambda_2>0$. We will prove that $\underline{v}=\overline{v}=v$, that is, $(\vc{x},\vc{y})\mapsto\vc{x}^*H\vc{x}+\vc{y}^*H\vc{y}$ is constant over the elements $(\vc{x},\vc{y})\in\mathcal{R}$.

For $(\vc{x},\vc{y})\in\mathcal{R}$, equation (\ref{2by2_lemma2_prop}) holds if $|y_1|=|y_2|=0$ or $\lambda_1|y_1|^2-\lambda_2|x_2|^2=0$. The first case implies, from (\ref{2by2_thm_eq1}), that $|x_1|=|x_2|=0$, which contradicts $(\vc{x},\vc{y})$ to be an element of $\bS_{\C^4}$. The latter case,
$\lambda_1|y_1|^2=\lambda_2|x_2|^2$, implies, from (\ref{2by2_lemma1}), that $x_1^*x_2=-y_1^*y_2$. Replacing on the equation $\vc{x}^*\vc{x}+\vc{y}^*\vc{y}=1$ we get
\[
|x_1|^2+|y_1|^2=\frac{\lambda_2}{\lambda_1+\lambda_2}, \text{ and } |x_2|^2+|y_2|^2=\frac{\lambda_1}{\lambda_1+\lambda_2}.
\]
It follows that
\begin{align*}
\vc{x}^*H\vc{x}+\vc{y}^*H\vc{y}=&h_{11}(|y_1|^2+|x_1|^2)+h_{22}(|y_2|^2+|x_2|^2)\\
=&h_{11}\frac{\lambda_2}{\lambda_1+\lambda_2}+h_{22}\frac{\lambda_1}{\lambda_1+\lambda_2}.
\end{align*}
Therefore, $\vc{x}^*H\vc{x}+\vc{y}^*H\vc{y}$ only depends on the entries of $H$ and $S$, being constant over $\mathcal{R}$.


\textbf{Case (iii):}

When $S=\mathrm{diag}(\lambda_1,\lambda_2)$ is positive semi-definite let, without loss of generality,  $\lambda_1>0$ and $\lambda_2=0$.
For $\omega\in B(A)$, from (\ref{B(A) in terms of H,S}),
\begin{equation*}
\omega=\vc{x}^*H\vc{x}+\vc{y}^* H \vc{y}+(\vc{x}^*S\vc{x}-\vc{y}^*S\vc{y})i, \quad (\vc{x},\vc{y})\in\mathcal{D}_0.
\end{equation*}

Since $(\vc{x}, \vc{y})\in \mathcal{D}_0$ and $\vc{x}^*S\vc{x}$= $\vc{y}^*S\vc{y}$, we have $|x_1|=|y_1|=0.$

For $H=\begin{pmatrix}
    h_{11} & h_{12}\\
    h_{21} & h_{22}\\
  \end{pmatrix}
$
we have
\begin{eqnarray*}
\omega     &=& \begin{pmatrix}
   0 & x_2^*\\
  \end{pmatrix}\begin{pmatrix}
    h_{11} & h_{12}\\
    h_{21} & h_{22}\\
  \end{pmatrix}\begin{pmatrix}
   0 \\
   x_2\\
  \end{pmatrix}+ \begin{pmatrix}
   0 & y_2^*\\
  \end{pmatrix}\begin{pmatrix}
    h_{11} & h_{12}\\
    h_{21} & h_{22}\\
  \end{pmatrix}\begin{pmatrix}
   0 \\
   y_2\\
  \end{pmatrix}\\
     &=& |x_2|^2 h_{22}+|y_2|^2 h_{22} \\
     &=&  h_{22},
  \end{eqnarray*}
since $(\vc{x},\vc{y})\in \bS_{\C^{4}}$.

Therefore, we can write $\omega= \begin{pmatrix}
   0 & 1\\
  \end{pmatrix}A\begin{pmatrix}
   0 \\
   1\\
  \end{pmatrix}\in W_\C^+(A)$.
\end{proof}
%


\end{document}